	\definecolor{myred}{rgb}{.5,.1,.1}
	\definecolor{mygreen}{rgb}{.1,.5,.1}
	\definecolor{myblue}{rgb}{.1,.1,.5}
	\definecolor{mycyan}{cmyk}{.7,0,0,0}
	\definecolor{mymagenta}{cmyk}{0,.7,0,0}
\date{19 June 2010}
\newif\ifhrule\hrulefalse
\newlength{\xSW} \newlength{\xNE} \newlength{\xO}%
\newlength{\ySW} \newlength{\yNE} \newlength{\yO}%
\newcommand{\Wordbox}[6]{%
%----- define the box's coordinates -----
\setlength{\xSW}{#2\psxunit} %
\setlength{\xNE}{#4\psxunit} %
	\setlength{\xO}{.5\xSW}%
	\addtolength{\xO}{.5\xNE}%
\setlength{\ySW}{#3\psyunit} %
\setlength{\yNE}{#5\psyunit} %
	\setlength{\yO}{.5\ySW}%
	\addtolength{\yO}{.5\yNE}%
%----- build the box -----
	\psframe[#1](#2,#3)(#4,#5)%
	\rput[c](\xO,\yO){#6}
	}
\def\wordbox(#1,#2)(#3,#4)#5{%
	\Wordbox{dimen=middle,fillstyle=solid,fillcolor=white,linewidth=0.01,linecolor=black}{#1}{#2}{#3}{#4}{#5}%
	}
\theoremstyle{plain}
\newtheorem{theorem}{Theorem}
\newtheorem{corollary}[theorem]{Corollary}
\newtheorem{lemma}[theorem]{Lemma}
\theoremstyle{definition}
\newtheorem{definition}[theorem]{Definition}
\newtheorem{example}[theorem]{Example} 
\theoremstyle{remark}
\newtheorem*{remark}{Remark} %[theo]{Remark}
\newtheorem*{remarks}{Remarks} %[theo]{Remark}
\newcommand{\demph}[1]{\textcolor{myblue}{\it #1}}
\newcommand{\bN}{\mathbb{N}}
\newcommand{\bP}{\mathbb{P}}
\newcommand{\bQ}{\mathbb{Q}}
\newcommand{\frakS}{\mathfrak{S}}
\newcommand{\A}{\mathbf{A}}
\newcommand{\B}{\mathbf{B}}
\newcommand{\C}{\mathbf{C}}
\newcommand{\nsym}{\textsl{NSym}}
\newcommand{\ncsym}{\textsl{NCSym}}
\newcommand{\Comps}{\bm\Gamma}
\newcommand{\Parts}{\bm\Pi}
\newcommand{\Atoms}{\bm{\dot\Pi}}
\newcommand{\length}[1]{\ell\left(#1\right)}
\def\mrg{{\textcolor{mymagenta}{|}}}
\def\cmrg{\bm|}
\def\restrict{{}_{\!}\rfloor}
\newcommand{\nshuff}{\mathrel{\raise1pt\hbox{$\scriptscriptstyle\cup{\mskip-4mu}\cup$}}}
\newcommand{\qshuff}{\nshuff}
\newcommand{\lqshuff}{\mathrel{\tilde\nshuff}}
\newcommand{\djcup}{\mathrel{\ensuremath{\mathaccent\cdot\cup}}}
\newcommand{\shift}[2]{{#1^{\scriptstyle+#2}}}
\newcommand{\std}[1]{{#1}^{\scriptstyle\downarrow}}
\def\ot{\otimes}
\def\id{\mathrm{id}}
\author{Aaron Lauve}
\address[Lauve]{
	Department of Mathematics\\
        Texas A\&M University\\
        College Station, TX\, 77843 \\
        (USA)
        }
\email{lauve@math.tamu.edu}
\urladdr{http://www.math.tamu.edu/$\small\sim$lauve}
\thanks{Lauve partially supported by NSA grant \#H98230-10-1-0362.}
\author{Mitja Mastnak}
\address[Mastnak]{
	Department of Mathematics \& Computing Science\\
        Saint Mary's University\\
        Halifax, Nova Scotia\, B3H 3C3 \\
        (CANADA) \ 27109
	}
\email{mmastnak@cs.smu.ca}
\urladdr{http://www.cs.smu.ca/$\small\sim$mmastnak}
\thanks{Mastnak partially supported by an NSERC Discovery Grant.}
\title[The Primitives and Antipode of $\ncsym$]{The Primitives and Antipode in the {H}opf Algebra of Symmetric Functions in Noncommuting Variables}
\keywords{combinatorial Hopf algebras, set compositions, set partitions, antipode, primitives} 
\begin{document}

\begin{abstract}
We identify a collection of primitive elements generating the Hopf algebra $\ncsym$ of symmetric functions in noncommuting variables and give a combinatorial formula for the antipode. 
\end{abstract}

\maketitle
%-------------------------------------------------------------------
% Begin SECTION
%-------------------------------------------------------------------
\section{Introduction}\label{sec: intro}

The Hopf algebra $\ncsym$ of symmetric functions in noncommuting variables was introduced in \cite{BRRZ:2008}, following the work of Wolf, Rosas, and Sagan \cite{Wol:1936,RosSag:2006}. We refer the reader to these sources for details on its realization as formal sums of monomials invariant under $\frakS_n$ (for all $n>0$). For our goals, it suffices to describe $\ncsym$ abstractly in terms of generators and relations, which we do in Section \ref{sec: ncsym}. 

The Hopf algebra $\ncsym$ is cocommutative (by definition) and freely generated as an algebra (essentially the main result of Wolf). Following \cite{BRRZ:2008}, we choose the \emph{atomic set partitions} $\Atoms$ as a free generating set (see Section \ref{sec: ncsym}). The Cartier-Milnor-Moore theorem then guarantees that $\ncsym$ is isomorphic to $\mathfrak U(\mathfrak L(\Atoms))$, the universal enveloping algebra of the free Lie algebra generated by $\Atoms$. This note grew out of an attempt to realize this isomorphism explicitly. 

In Section \ref{sec: prelims}, we record some useful combinatorial machinery. Section \ref{sec: ncsym} contains a precise definition of the Hopf algebra $\ncsym$ as well as the statements and proofs of our main results. In Section \ref{sec: summary}, we comment on: (i) an important connection between $\ncsym$ and supercharacter theory; and (ii) key structural features of our proofs that will be further developed in \cite{LauMas:1}.

\subsection*{Acknowledgements} We thank Marcelo Aguiar and Nantel Bergeron for useful conversations and for pointing us to the related work of Patras and Reutenauer. 

%-------------------------------------------------------------------
% Begin SECTION
%-------------------------------------------------------------------
\section{Combinatorial Preliminaries}\label{sec: prelims}
We record some useful shorthand for manipulating set partitions and set compositions. Throughout, we let $\bN$ and $\bP$ denote the set of nonnegative integers and positive integers, respectively. Also, given $n\in\bP$, we let $[n]$ denote the subset $\{1,2,\dotsc,n\}$. 
%Given $X\subseteq \bP$, we write $K\djcup L = X$ if $K\cap L = \emptyset$ and $K\cup L = X$. We think of this as an ordered partition of $X$, so $K\djcup L \neq L\djcup K$.

%-------------------------------------------------------------------
\subsection{Set partitions}\label{sec: set parts}
Fix $X\subseteq \bP$ and let $\A = \{A_1,\dotsc, A_r\}$ be a set of subsets of $X$. We say that $\A$ is a \demph{set partition} of $X$, written $\A \vdash X$, if and only if $A_1\cup\dotsb\cup A_r=X$, $A_i\neq\emptyset$ ($\forall i$), and $A_i\cap A_j=\emptyset$ ($\forall i\neq j$). 
We order the parts in increasing order of their minimum elements. 
The \demph{weight} $|\A|$ of $\A$ is the cardinality of $X$ and the \demph{length} $\length{\A}$ of $\A$ is its number of parts ($r$). 
In what follows, we lighten the heavy notation for set partitions, writing, e.g., the set partition $\{\{1,3\},\{2,8\},\{4\}\}$ as $13.28.4$. We write $\Parts(X)$ for the set partitions of $X$ and $\Parts(n)$ when $X=[n]$.

Given any $A\subseteq \bN$ and $k\in \bN$, we write $\shift{A}{k}$ for the set 
\[
	\shift{A}{k}:=\{a+k \mid a\in A\}.
\]
By extension, for any set partition $\A=\{A_1,A_2,\ldots,A_r\}$ we set 
$
	\shift{\A}{k}:=\{\shift{A_1}{k},\shift{A_2}{k},$ $\ldots,\shift{A_r}{k}\}.
$
The operator $\shift{(\hbox{--})}{k}$ has a complement $\std{(\hbox{--})}$ called the \demph{standardization} operator. It maps set partitions $\A$ of any cardinality $n$ subset $X\subseteq \bP$ to set partitions of $[n]$, by defining $\std{\A}$ as the pullback of $\A$ along the unique increasing bijection from $[n]$ to $X$. For example, $\std{(18.4)} = 13.2$ and $\std{(18.4.67)}=15.2.34$. 
Given set partitions $\B \vdash [m]$ and $\C \vdash [n]$, we let $\B \mrg \C$ denote the set partition $\B \cup \shift{\C}{m}$ of $[m+n]$. 
%If $\B$ and $\C$ partition more general sets $X$ and $Y$, respectively, we may write $\B\umrg\C$ to denote \emph{any} suitable shift of the entries in $\C$ satisfying $\max{\B}<\min{\C}$. This will typically be followed by a standardization of the result, so ultimately no ambiguity is introduced. For example, if $\B = 14.26$ and $\C = 3.4$, the partition $\B\umrg\C = 14.26.8.9$ is just an intermediate step on the way to $\A = \std{\left(\B\umrg\C\right)} = 13.24.5.6$.

\begin{definition}\label{def: atomic} 
A set partition $\A=\{A_1,A_2,\ldots,A_r\}$ of $[n]$ is \demph{atomic} (``connected'' in \cite{HivNovThi:2008}) if there {does not} exist a subset $\B\subseteq \A$ and an integer $m<n$ such that $\B \vdash [m]$. Conversely, $\A$ is not atomic if there are set partitions $\B$ of $[n']$ and $\C$ of $[n'']$ splitting $\A$ in two: $\A = \B \mrg \C$. 
\end{definition}

For example, the partition $17.235.4.68$ is atomic, while $12.346.57.8$ is not. The maximal splitting of the latter would be $12 \mrg 124.35 \mrg 1$.
%, but we often abuse notation and write $12 \mrg 346.57 \mrg 8$ to improve legibility. 
We denote the atomic set partitions by $\Atoms$.

If $\A$ is a set partition with $r$ parts, and $K\subseteq[r]$, we write $A_K$ to denote the sub partition $\{A_k \mid k\in K\}\subseteq \A$. For example, if $\A=17.235.4.68$, then $\A_{\{1,3,4\}} = 17.4.68$.
%The \demph{standardization} operator on set partitions will be useful. Given $\A \vdash X$ for some cardinality $r$ subset $X\subseteq \bP$, define $\std{\A}$ as the pullback of $\A$ along the unique increasing bijection from $[r]$ to $X$. For example, $\std{(18.4)} = 13.2$ and $\std{(18.4.67)}=15.2.34$. 

%-------------------------------------------------------------------
\subsection{Set compositions}\label{sec: set comps}
Fix $K\subseteq \bP$ and let $\gamma = (\gamma_1,\dotsc, \gamma_s)$ be a sequence of subsets of $K$. We say that $\gamma$ is a \demph{set composition} of $K$, written $\gamma \vDash K$, if and only if $\gamma_1\cup\dotsb\cup \gamma_s=K$, $\gamma_i\neq\emptyset$ ($\forall i$), and $\gamma_i\cap \gamma_j=\emptyset$ ($\forall i\neq j$). 
The \demph{weight} $|\gamma|$ and \demph{length} $\length{\gamma}$ are defined as for set partitions.
We use ``$\cmrg$'' in place of ``.'' in our shorthand for set compositions, e.g., the set composition $(\{3,8\},\{1,2\},\{4\})$ is abbreviated as $38\cmrg12\cmrg4$. We write $\Comps(K)$ for the set partitions of $K$ and $\Comps(r)$ when $K=[r]$.

If $\gamma$ is a set composition of $X$ and $K\subseteq X$, we write $\gamma\restrict_K$ for the induced set composition of $K$. For example, if $\gamma = 38\cmrg 12\cmrg 4$ and $K = \{3,4,8\}$, then $\gamma\restrict_K = 38\cmrg 4$. Similarly, $\gamma\restrict_{\{1,3\}} = 3\cmrg 1$.
Note that $\gamma\restrict_{\{1,3\}}$ is not the same as $\gamma_{\{1,3\}}$. Following the notation introduced for set partitions, we let $\gamma_{\{1,3\}}$ denote the subsequence $38\cmrg 4$ of $\gamma$. 

Given two set compositions $\gamma,\rho \vDash K$, we say that $\gamma$ \demph{refines} $\rho$, written $\gamma \succ \rho$, if each block of $\rho$ is the union of a contiguous string of blocks of $\gamma$. For example, $2\cmrg4\cmrg3\cmrg17\cmrg 9 \succ 234\cmrg179 \succ 123479$.

%-------------------------------------------------------------------
\subsection{Set compositions as functions on set partitions}\label{sec: functions}
Let $\A$ be a set partition of $X$ with $r$ parts and suppose $\gamma=(\gamma_1,\dotsc,\gamma_s)$ is a set composition of $K\subseteq[r]$. We define a new set partition $\gamma[\A]$ as follows:
\begin{gather}\label{eq: gamma of A}
	\gamma[\A] := \std{\A_{\gamma_1}} \mrg \std{\A_{\gamma_2}} \mrg \dotsb \mrg \std{\A_{\gamma_{s}}} \,.
\end{gather}
See Figure \ref{fig: gamma of A} for several examples. 
%-----------------------------------------------------------------------------
\begin{figure}[hbt]
\[
\begin{array}{r|lll}
%\gamma\ \big\backslash\ \A & 13.29.458.7 & 13.28.456.7 & 15.28.346.7  \\[0.1ex]
%$\backslashbox{$\gamma$}{$\A$}$ & 13.29.458.7 & 13.28.456.7 & 15.28.346.7  \\[0.1ex]
 & 13.29.458.7 & 13.28.456.7 & 15.28.346.7  \\[0.4ex]
\hline
13\cmrg2\  & 12.345.67 & 12.345.67 & 14.235.67 \\
2\cmrg34\  & 12.346.5 & 12.345.6 & 12.345.6 \\
1\cmrg234\  & 12.38.457.6 & 12.38.456.7 & 12.38.457.6  \\
\end{array}
\]
\caption{Set partitions $\gamma[\A]$ for several examples of $\gamma$ and $\A$.}
\label{fig: gamma of A}
\end{figure}
%
%It will be helpful to have notation for an intermediate step between $\A$ and $\gamma[\A]$. We write $\gamma\llb\A\rrb$ for any of the set partitions
%\[
%	\gamma\llb \A\rrb := A_{\gamma_1} \umrg A_{\gamma_2} \umrg \dotsb \umrg A_{\gamma_s} \,,
%\]
%so $\gamma[\A] = \std{\left(\gamma\llb\A\rrb\right)}$. If $K\subseteq[r]$, we may view $K$ as a set composition with one part. Note that $K\llb\A\rrb = A_K$ in this case. 

%-------------------------------------------------------------------
% Begin SECTION
%-------------------------------------------------------------------
\section{Structure of $\ncsym$}\label{sec: ncsym}

Let $\ncsym=\bigoplus_{n\geq0} \ncsym_n$ denote the graded $\bQ$ vector space whose $n$th graded piece consists of formal sums of set partitions $\A \in \Parts(n)$. We give $\ncsym$ the structure of graded Hopf algebra as follows. The algebra structure is given by
\begin{gather}\label{eq: ncsym product} 
	\A \bm\cdot \B = \A \mrg \B \quad\hbox{and}\quad 1_{\ncsym} = \bm\emptyset \,,
\end{gather}
where $\bm\emptyset$ is the unique set partition of the empty set. The coalgebra structure is given by
\begin{gather}\label{eq: ncsym coproduct} 
	\Delta(\A) = \sum_{K\djcup L = [r]} \std{(\A_K)} \otimes \std{(\A_L)} \quad\hbox{and}\quad \varepsilon(\A) = 0 \hbox{\ for all\ }\A\neq\bm\emptyset \,.
\end{gather}
Here, $\A$ is a set partition with $r$ parts, and $K\djcup L = [r]$ is understood as an ordered disjoint union, i.e., $K\cup L = [r], K\cap L = \emptyset,$ and $K\djcup L \neq L \djcup K$. 

In terms of the symmetric function interpretation of $\ncsym$, the formulas above correspond to working in the basis of \demph{power sum symmetric functions}. The compatibility of product and coproduct is proven in \cite{BRRZ:2008}, the product formula is proven in \cite{BerZab:2009}, and the coproduct formula is proven in \cite{BHRZ:2006}. A combinatorially meaningful formula for the antipode in $\ncsym$ has been missing until now (Theorem \ref{thm: ncsym antipode}). 

Evidently, $\ncsym$ is cocommutative and freely generated by $\Atoms$. The Cartier-Milnor-Moore theorem guarantees algebraically independent primitive elements $p(\A)$ of $\ncsym$ associated to each $\A \in\mathcal P$. We find them below (Theorem \ref{thm: ncsym primitives}). 

%-----------------------------------------------------------------------------
\subsection{Primitive generators of $\ncsym$}\label{sec: ncsym primitives}
We aim to prove the following result.

\begin{theorem}\label{thm: ncsym primitives} 
Let $\Comps'(r)$ denote the set compositions $\gamma$ of $[r]$ with $1\in\gamma_1$. If $\A$ is a set partition with $r$ parts, then
\begin{gather}\label{eq: ncsym primitives} 
p(\A) := \sum_{\gamma\in\Comps'(r)} (-1)^{\length{\gamma}-1} \gamma[\A]
\end{gather}
is a nonzero primitive if $\A\in\Atoms$ and zero otherwise. 
\end{theorem}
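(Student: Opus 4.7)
My plan is to verify $\Delta(p(\A))=p(\A)\otimes 1+1\otimes p(\A)$ by direct computation, then address the atomic versus non-atomic distinction through parallel sign-reversing involutions. The two tasks are somewhat independent: primitivity is a property of the \emph{formula} regardless of whether $\A$ is atomic, while the atomicity question concerns whether the formula produces a nonzero element.

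The essential combinatorial identity I need for the coproduct is the following. Given $\gamma\in\Comps'(r)$, let $\sigma_\gamma\in\frakS_r$ be the permutation obtained by listing the elements of $\gamma_1,\gamma_2,\dotsc$ in block-by-block increasing order. Then for any $K\subseteq[r]$, setting $I:=\sigma_\gamma(K)$,
\[
 \std{((\gamma[\A])_K)} \;=\; \std{(\gamma\restrict_I)}\bigl[\std{\A_I}\bigr],
\]
where $\gamma\restrict_I=(\gamma_1\cap I,\gamma_2\cap I,\dotsc)$ with empty blocks removed. Plugging this into \eqref{eq: ncsym coproduct}, substituting $I$ for $K$, and interchanging the sums yields
\[
 \Delta(p(\A)) \;=\; \sum_{I\djcup I^c=[r]}\;\sum_{\gamma\in\Comps'(r)}(-1)^{\ell(\gamma)-1}\,\std{(\gamma\restrict_I)}[\std{\A_I}]\,\otimes\,\std{(\gamma\restrict_{I^c})}[\std{\A_{I^c}}].
\]
For any fixed split with both $I,I^c$ nonempty and any fixed pair $(\delta,\delta')=(\gamma\restrict_I,\gamma\restrict_{I^c})$, I will define a sign-reversing involution on the contributing $\gamma$'s. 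Assume WLOG $1\in I$ and let $j_0$ be the smallest index with $\gamma_{j_0}\cap I^c\neq\emptyset$ (so $\gamma_{j'}\subseteq I$ for $j'<j_0$ by minimality). Pair $\gamma$ as follows: if $\gamma_{j_0}\subseteq I^c$ (forcing $j_0\geq 2$), merge $\gamma_{j_0-1},\gamma_{j_0}$ into the single block $\gamma_{j_0-1}\cup\gamma_{j_0}$; if $\gamma_{j_0}$ is mixed, split it into $\gamma_{j_0}\cap I$ and $\gamma_{j_0}\cap I^c$. One checks this is an involution preserving $(\delta,\delta')$ and toggling $\ell(\gamma)$ by one, so the signed count vanishes. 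The only surviving contributions come from $I\in\{\emptyset,[r]\}$, and they assemble to precisely $1\otimes p(\A)+p(\A)\otimes 1$, proving primitivity.

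For non-atomic $\A=\B\mrg\C$ with $r':=\ell(\B)$, a structurally identical involution on $\Comps'(r)$ yields $p(\A)=0$: now play $[r']$ against $\{r'+1,\dotsc,r\}$ in place of $I$ against $I^c$. The new observation, not present in the primitivity argument, is that $\gamma[\A]$ itself is \emph{unchanged} by the split-or-merge operation, because $\A=\B\mrg\C$ implies
\[
 \std{\A_{R'\cup B'}} \;=\; \std{\A_{R'}}\mrg\std{\A_{B'}}
\]
whenever $R'\subseteq[r']$ and $B'\subseteq\{r'+1,\dotsc,r\}$. The sign $(-1)^{\ell(\gamma)-1}$ still flips, so all terms cancel in pairs.

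For atomic $\A$, by contrast, the trivial composition $\gamma=([r])$ contributes $\gamma[\A]=\A$ with coefficient $+1$, while every $\gamma$ with $\ell(\gamma)\geq 2$ produces a nontrivially $\mrg$-decomposable $\gamma[\A]$, which cannot equal the atomic $\A$. Hence $\A$ appears with coefficient $+1$ in $p(\A)$, so $p(\A)\neq 0$. I expect the main obstacle to be establishing the combinatorial identity in the second paragraph: one must track carefully how standardization interacts with the $\mrg$-concatenation structure inside $\gamma[\A]$, where elements that were far apart in $\A$ may become adjacent (and vice versa), so the effect of restricting a subpartition and re-standardizing is not transparent. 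Once this identity is secured, both involutions are the same split-or-merge toggle and should follow by routine case analysis.
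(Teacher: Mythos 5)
Your proposal is correct and follows essentially the same route as the paper: your key identity $\std{((\gamma[\A])_K)}=\std{(\gamma\restrict_I)}[\std{\A_I}]$ is the paper's observation that $\gamma[\A]_K=(\gamma\restrict_{K'})[\A]$, your split-or-merge involution at the first block meeting $I^c$ is exactly the paper's sign-reversing pairing on left quasi-shuffles (Lemma \ref{thm: mcsym primitives}), and your non-atomic cancellation and leading-term argument for $p(\A)\neq 0$ match the paper's as well. The only difference is organizational: the paper isolates the cancellation as a lemma in $\bQ\mathbf{P}\otimes\bQ\mathbf{P}$ and then evaluates on $\A\otimes\A$, while you run the involution directly on the coproduct terms.
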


\begin{remark} An arbitrary choice was made in \eqref{eq: ncsym primitives}, namely demanding that the distinguished element $1$ belongs to the first block of $\gamma$. Different choices of distinguished elements (and distinguished blocks) give rise to different sets of primitives. Summing over these choices produces a projection operator onto the space of all primitives. 
This operator can be understood in the context of the work Patras and Reutenauer on Lie idempotents \cite{PatReu:1999,PatReu:2002a}. 
Fisher \cite{Fis:thesis} uses similar idempotents to work out primitive formulas for several Hopf monoids in species. His formulas, which were discovered independently, also give rise to \eqref{eq: ncsym primitives}.
%Our formulas for $\ncsym$ appear among these and were discovered independently. 
\end{remark}

To prove Theorem \ref{thm: ncsym primitives}, we need a lemma about \emph{left quasi-shuffles.} Let $\mathbf{P}=\langle \mathrm{fin}(2^\bP) \rangle$ denote the free monoid generated by the finite subsets of $\bP$ and let $\bQ\mathbf{P}$ denote the corresponding free algebra. We use $\mrg$ to separate letters in the words $w\in\mathbf{P}$. Given a word $u=u_1\mrg\dotsb\mrg u_k$ on $k$ letters and an index $i\leq k$, we write $u_{[i]}$ for the prefix $u_1\mrg\dotsb\mrg u_i$ and $u^{[i]}$ for the suffix $u_{i+1}\mrg\dotsb\mrg u_k$.

We say that words $u=u_1\mrg\dotsb\mrg u_k$ and $v=v_1\mrg\dotsb\mrg v_l$ in $\mathbf{P}$ are \emph{disjoint} if $\bigl(\bigcup_i u_i\bigr) \cap \bigl(\bigcup_j v_j\bigr) = \emptyset$. (We identify the set compositions $\Comps(K)$ with the words $w\in\mathbf{P}$ that further satisfy $w_i\cap w_j = \emptyset$ for $i\neq j$ and $\bigcup_j w_j = K$.) 

The \demph{quasi-shuffle} $u\qshuff v$ of two disjoint words $u,v\in \mathbf{P}$ is defined recursively as follows:
\begin{itemize}
\item $u \qshuff \emptyset = u$ and $\emptyset \qshuff v = v$;
\item if $u=a\mrg u'$ and $v=b\mrg v'$, then 
\[
	u \qshuff v = \left\{a\mrg w : w\in u'\qshuff v \right\} \cup
		\left\{ab\mrg w : w\in u'\qshuff v' \right\} \cup
		\left\{b\mrg w : w\in u\qshuff v' \right\} .
\]
\end{itemize}
Here $\emptyset$ denotes the unique set composition of the emptyset and ``$ab$'' denotes the union $a\cup b$, a single letter in $\mathbf{P}$. 
The quasi-shuffle governs the formula for the product of two monomial symmetric functions in noncommuting variables. We need a subset of these that we call the \demph{left quasi-shuffles:} 
\[
	u \lqshuff v = \left\{a\mrg w : w\in u'\qshuff v \right\} \cup
		\left\{ab\mrg w : w\in u'\qshuff v' \right\} 
\]
for nonempty disjoint words $u=a\mrg u'$ and $v=b\mrg v'$ in $\mathbf{P}$. (Note that the recursive definition involves $\qshuff$, not $\lqshuff$.) 

\begin{example} Consider the set compositions $1\mrg3$ and $24$. We have
\[
	1\mrg 3 \lqshuff 24 = \bigl\{1\mrg 3 \mrg 24, \, 1\mrg 234, 1\mrg 24\mrg 3, \, 124\mrg 3\bigr\} \,.
\]
Note that for each left quasi-shuffle $\gamma$ above, $\gamma\restrict_{\{1,3\}} = 1\mrg3$ and $\gamma\restrict_{\{2,4\}} = 24$.
\end{example}

\begin{lemma}\label{thm: mcsym primitives} Let $\Comps'(r)$ denote the set compositions $\gamma$ of $[r]$ with $1\in\gamma_1$. Given any partition $K\djcup L = [r]$ with $1\in K\subsetneq[r]$, we have
\[
	\sum_{\gamma\in\Comps'(r)} (-1)^{\length{\gamma}} \gamma\restrict_K \otimes \gamma\restrict_L = 0 \,,
\]
as an element of $\bQ\mathbf{P}\otimes \bQ\mathbf{P}$. 
\end{lemma}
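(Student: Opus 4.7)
The plan is to reduce the claim to a sign-cancellation identity about quasi-shuffles. The reindexing is suggested by the example immediately preceding the lemma: group the sum according to the pair $(\alpha,\beta) := (\gamma\restrict_K, \gamma\restrict_L)$. I claim that, for fixed $\alpha \vDash K$ with $1\in\alpha_1$ and $\beta \vDash L$, the $\gamma\in\Comps'(r)$ producing this pair are exactly the elements of $\alpha \lqshuff \beta$. Indeed, every such $\gamma$ is a quasi-shuffle of $\alpha$ and $\beta$; the requirement $1\in\gamma_1$, together with $1\in\alpha_1$, forces $\gamma_1$ to equal either $\alpha_1$ or $\alpha_1 \cup \beta_1$, which are precisely the two cases in the recursion defining $\lqshuff$. (Dropping the $1 \in \gamma_1$ hypothesis would permit a third option $\gamma_1 = \beta_1$, recovering the full $\qshuff$.)

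After this reindexing, the left-hand side becomes
\[
\sum_{\alpha,\beta}\left(\sum_{\gamma\in\alpha\lqshuff\beta}(-1)^{\length{\gamma}}\right)\alpha\otimes\beta,
\]
and since $L\neq\emptyset$ forces $\beta\neq\emptyset$, it suffices to show the inner sum vanishes whenever both $\alpha$ and $\beta$ are nonempty. Writing $\alpha = a\mrg\alpha'$, $\beta = b\mrg\beta'$, and $S(u,v) := \sum_{w\in u\qshuff v}(-1)^{\length{w}}$, unpacking the defining recursion of $\lqshuff$ gives
\[
\sum_{\gamma\in\alpha\lqshuff\beta}(-1)^{\length{\gamma}} = -S(\alpha',\beta) - S(\alpha',\beta'),
\]
reducing the task to the identity $S(u,v) + S(u,v') = 0$ whenever $v = b\mrg v'$ is nonempty.

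I would prove this final identity by induction on $\length{u}$. The base case $u=\emptyset$ is immediate from $S(\emptyset,w) = (-1)^{\length{w}}$ together with $\length{v} = 1+\length{v'}$. For the inductive step, writing $u = a \mrg u'$, the recursion for $\qshuff$ applied to $(u,v)$ (valid whether or not $v'$ is empty, using the convention $S(\star,\emptyset) = (-1)^{\length{\star}}$) yields
\[
S(u,v) = -S(u',v) - S(u',v') - S(u,v'),
\]
so $S(u,v) + S(u,v') = -\bigl(S(u',v)+S(u',v')\bigr)$, which vanishes by induction. The real content of the proof is the left-quasi-shuffle reindexing, which cleanly isolates the role of the hypothesis $1\in K$; beyond that, the only obstacle is routine sign-bookkeeping in the two recursive unpackings.
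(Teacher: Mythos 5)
Your proof is correct, and its first half coincides with the paper's: you group the sum by the pair $(\alpha,\beta)=(\gamma\restrict_K,\gamma\restrict_L)$ and identify the fiber over $(\alpha,\beta)$ with the left quasi-shuffles $\alpha\lqshuff\beta$ (you in fact justify this identification a bit more explicitly than the paper does, noting that $1\in\gamma_1$ rules out the case $\gamma_1=\beta_1$ and leaves exactly the two cases $\gamma_1=\alpha_1$ and $\gamma_1=\alpha_1\cup\beta_1$). Where you diverge is in showing that $\sum_{\gamma\in\alpha\lqshuff\beta}(-1)^{\length{\gamma}}=0$. The paper does this with a sign-reversing involution on $\alpha\lqshuff\beta$ itself: in any left quasi-shuffle the first block of $\beta$ either stands alone immediately after some block of $\alpha$ or is merged into that block, and toggling between these two configurations changes the length by one, matching even-length shuffles with odd-length ones. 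You instead unpack the recursion once to get $-S(\alpha',\beta)-S(\alpha',\beta')$, where $S(u,v)$ is the signed sum over the full quasi-shuffle $u\qshuff v$, and then prove $S(u,v)+S(u,v')=0$ for nonempty $v=b\mrg v'$ by induction on $\length{u}$ via the three-term recursion; your treatment of the degenerate cases ($\alpha'=\emptyset$ or $\beta'=\emptyset$) is sound, and the hypotheses enter exactly where they should ($1\in K$ gives $\alpha\neq\emptyset$ with $1\in\alpha_1$, and $K\neq[r]$ gives $\beta\neq\emptyset$). Both routes are short and valid: the involution explains the cancellation term by term and is the more transparent combinatorial argument, while your induction is more mechanical but yields the slightly stronger by-product $S(u,v)=-S(u,v')$ (hence a closed evaluation of the signed full quasi-shuffle sum), from which the lemma follows at once.
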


\begin{proof} Consider a term $u \otimes v$ in the sum. (The hypothesis $K\neq [r]$ guarantees that $v\neq\emptyset$.) The compositions $\gamma$ that satisfy $\gamma\restrict_K = u$ and $\gamma\restrict_L = v$ are precisely the left quasi-shuffles $u \lqshuff v$. We now establish a bijection between those of even length and those of odd length. This will complete the proof. 

A left quasi-shuffle $w\in u\lqshuff v$ falls into one of two types according to whether or not the first letter of $v$ appears as a letter in $w$: after beginning with some (possibly empty) initial prefix of $u$, say up to the $i$th letter $u_i$, the rest of the word $w$ looks like either $u_{i+1}\mrg v_1\mrg w'$ or $u_{i+1}v_1\mrg w'$ for $w'$ in $u^{[i+1]}\qshuff v^{[1]}$. Here, again, $u_{i+1}v_1$ denotes $u_{i+1}\cup v_1$. The indicated pairing $w \mapsto \phi(w)$ ($u_{i+1}\mrg v_1 \mapsto u_{i+1}v_1$) decreases the number of letters by one. Thus $w$ and $\phi(w)$ contribute opposite signs to the coefficient of $u\otimes v$. 
\end{proof}

\begin{proof}[Proof of Theorem \ref{thm: ncsym primitives}]
Since $\Delta$ is cocommutative, we need only consider the terms $\std{\gamma[\A]_K} \otimes \std{\gamma[\A]_L}$ from each $\Delta(\gamma[\A])$ satisfying $1\in K$. Notice that ${\gamma[\A]_K} = {\bigl(\gamma\restrict_{K'}\bigr)[\A]}$ for some $K'\subseteq[r]$ (with $1\in K'$ if $1\in\gamma_1$). Thus
\begin{gather*}
\sum_{\gamma\in\Comps'(r)} (-1)^{\length{\gamma}} 
	\sum_{\substack{K\djcup L = [r] \\ 1\in K}} 
	\std{\gamma[\A]_K} \otimes \std{\gamma[\A]_L}
\intertext{%
is the same sum as}
\sum_{\substack{K\djcup L = [r] \\ 1\in K}} 
	\sum_{\gamma\in\Comps'(r)} (-1)^{\length{\gamma}} 
	\std{\bigl(\gamma\restrict_K\bigr)[\A]} \otimes \std{\bigl(\gamma\restrict_L\bigr)[\A]} \,,
\intertext{%
or even}
%{\Biggl(
\Biggl(\sum_{\substack{K\djcup L = [r] \\ 1\in K}} 
	\sum_{\gamma\in\Comps'(r)} (-1)^{\length{\gamma}} 
	\gamma\restrict_K \otimes \gamma\restrict_L \Biggr)[\A\otimes \A]
	%\Biggr)}
	\,.
\end{gather*}
Lemma \ref{thm: mcsym primitives} reduces this expression to ${(0)[\A\otimes\A]}=0$ when $K\neq[r]$. When $K=[r]$, the expression is precisely $p(\A)\otimes 1$. Conclude that $p(\A)$ is a primitive element when $\A$ is atomic. 

To show that $p(\A)$ is nonzero when $\A$ is atomic, we merely remark that $\gamma[\A]$ has at least as many atoms as $\gamma$ has parts. It is only when $\A\in\Atoms$ and $\gamma=[r]$ that a single atom is obtained: $\gamma[\A]$ contributes $\A$ to the sum $p(\A)$ only when $\gamma=[r]$. Thus $p(\A)\neq 0$.

We now show that $p(\A)=0$ when $\A$ is not atomic. Suppose $\A=\A'\mrg\A''$ and put $\length{\A} = r, \length{\A'} = r'$. (Note that $r'<r$ by assumption.) Divide the set compositions $\gamma\in \Comps'(r)$ into two types according to the following dichotomy. If $\gamma_i$ is the first letter, from the left, satisfying $\gamma_i \cap \{r'+1,\dotsc,r\} \neq \emptyset$, then either $\gamma_i\cap\{1,\dotsc,r'\}$ is empty or it is not. The pairing $\gamma\mapsto \phi(\gamma)$ ($\gamma_{i-1}\mrg\gamma_i \mapsto \gamma_{i-1}\gamma_i$) decreases the number of letters by one. However, the difference is not visible at the level of functions on $\A$. That is, $\gamma[\A] = \phi(\gamma)[\A]$. This completes the proof.
\end{proof}

\begin{corollary}The set $\{p(\A) \mid \A\in\Atoms\}$ comprises irredundant (algebraically independent) generators of the Lie algebra of primitive elements of $\ncsym$.
\end{corollary}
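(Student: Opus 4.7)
The plan is to combine Wolf's freeness theorem with the Cartier--Milnor--Moore decomposition, and then do a one-line inspection of the formula \eqref{eq: ncsym primitives}. Since $\ncsym$ is free as an associative algebra on $\Atoms$ and cocommutative, the CMM isomorphism $\ncsym \cong \mathfrak{U}(\mathfrak{L})$ (with $\mathfrak{L}$ the Lie algebra of primitives) from Section \ref{sec: intro} forces $\mathfrak{L}$ to be a \emph{free} Lie algebra whose free generators are graded-enumerated by $\Atoms$. Thus all that remains is to match the $p(\A)$ up with a free generating set for $\mathfrak{L}$.

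To do so, I would pass to the space of indecomposables $Q(\ncsym) := \ncsym^+/(\ncsym^+)^2$, where $\ncsym^+$ is the augmentation ideal. The Poincar\'e--Birkhoff--Witt theorem (in characteristic zero) supplies the canonical identification $\mathfrak{L}/[\mathfrak{L},\mathfrak{L}] \cong Q(\ncsym)$, and Wolf's freeness result supplies $\{\A \mid \A \in \Atoms\}$ as a basis of $Q(\ncsym)$. It is a standard fact about free Lie algebras that a subset is an algebraically independent Lie-generating set if and only if its image in the abelianisation is a linear basis. So the corollary reduces to verifying that the classes of $\{p(\A) \mid \A \in \Atoms\}$ in $Q(\ncsym)$ coincide (up to a unipotent change of basis) with the classes of $\{\A \mid \A \in \Atoms\}$.

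The required calculation is immediate from \eqref{eq: ncsym primitives}. For atomic $\A$ with $\length{\A} = r$, every composition $\gamma \in \Comps'(r)$ of length $\length{\gamma} \geq 2$ produces
\[
\gamma[\A] = \std{\A_{\gamma_1}} \bm\cdot \std{\A_{\gamma_2}} \bm\cdot \cdots \bm\cdot \std{\A_{\gamma_s}},
\]
a product of at least two elements of $\ncsym^+$, hence an element of $(\ncsym^+)^2$. Only the singleton composition $\gamma = ([r])$ escapes, contributing $+\A$. Therefore $p(\A) \equiv \A \pmod{(\ncsym^+)^2}$, and the images of the $p(\A)$ form exactly the basis $\{\A \mid \A \in \Atoms\}$ of $Q(\ncsym)$, completing the proof.

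I do not anticipate a serious obstacle; the argument is structural. The single point requiring care is the PBW identification of $Q(\ncsym)$ with $\mathfrak{L}/[\mathfrak{L},\mathfrak{L}]$, which I would either cite or simply note as the standard characteristic-zero consequence of Cartier--Milnor--Moore already invoked above.
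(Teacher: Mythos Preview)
Your proof is correct. The key computational step is the same as the paper's: for atomic $\A$, the only term of $p(\A)$ that is not a nontrivial product is $\A$ itself (the $\gamma=([r])$ term). You record this as $p(\A)\equiv\A\pmod{(\ncsym^+)^2}$; the paper records it equivalently by saying that, under a suitable lexicographic order on words in atoms, the leading term of $p(\A)$ is $\A$. From there the arguments diverge. The paper uses the leading-term matching to conclude directly that the $p(\A)$ freely generate $\ncsym$ as an associative algebra, and then invokes Lyndon words and Hall polynomials to exhibit an explicit basis $\bigl\{[[\,p(\A')p(\A'')\cdots p(\A^{(t)})\,]]\bigr\}$ of the Lie algebra of primitives. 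You instead pass to indecomposables, identify $Q(\ncsym)\cong\mathfrak{L}/[\mathfrak{L},\mathfrak{L}]$ via PBW, and appeal to the criterion that a homogeneous subset of a graded free Lie algebra is a free Lie-generating set iff its image in the abelianisation is a basis. Both routes are valid; the paper's buys an explicit basis of the primitives, while yours is shorter and sidesteps the Lyndon combinatorics. The one place your write-up could use a citation or an extra sentence is the assertion that $\mathfrak{L}$ is itself free: this does follow from $\mathfrak{U}(\mathfrak{L})\cong\ncsym$ being free associative (PBW plus a graded dimension count, or the observation that once you know $p(\A)\equiv\A$ modulo decomposables the $p(\A)$ freely generate $\ncsym$ and are primitive, so $\ncsym$ carries the standard tensor-algebra coproduct on them), but it is not literally part of the CMM statement you cite.
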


\begin{proof}
First we show algebraic independence of $\{p(\A) \mid \A\in\Atoms\}$. Let $<$ be a total order on the set $\Atoms$ satisfying $\A < \A'$ when $|\A|>|\A'|$. If we extend $<$ to a lexicographic ordering of $\Parts$ in the usual manner, then the leading (minimum) term of $p(\A)$ is $\A$. (As remarked in the proof above, the only term in $p(\A)$ with one atom is $\A$.) Conclude that any polynomial in the $p(\A)$s ($\A\in \Atoms$) has the same leading term as the corresponding polynomial in the $\A$s. Hence the $p(\A)$s freely generate $\ncsym$.

Turning to the Lie algebra of primitive elements in $\ncsym$, we recall the construction of Hall polynomials.
Given an ordered alphabet $X$ and a word $w=x_1\dotsc x_t$ over $X$, we say that $w$ is a \demph{Lyndon word} if $w$ is lexicographically smaller than all its proper suffixes $x_i\dotsb x_t$ ($i>1$). A classical result has that all Lyndon words $w$ have a unique proper decomposition $w=uv$ with $v$ Lyndon of maximum length. See \cite{Reu:1993}. We define the Hall polynomial $[[w]]$ by forming the Lie bracket $[u,v]$ at successively smaller Lyndon factorizations. For example, if $w=aabb$, then the Lyndon factorization of $w$ is $(a,abb)$; next, $abb$ is further factored as $(ab,b)$ and $ab$ is factored as $(a,b)$. The resulting Hall polynomial is $[[w]] = [a,[[a,b],b]]$. 

If $w$ is Lyndon, then the leading term of $[[w]]$ is $w$. A consequence is the classical result that the Hall polynomials $\{[[w]] \mid w\hbox{ is Lyndon}\}$ form a basis of the free Lie algebra generated by $X$. Turning to $\ncsym$, the first paragraph of the proof shows that we may replace the alphabet $\Atoms$ with the alphabet $\{p(\A) \mid \A \in \Atoms\}$. Conclude that the Hall polynomials
$[[p(\A')p(\A'')\dotsb p(\A^{(t)})]]$, with $\A'\mrg \A''\mrg \dotsb\mrg \A^{(t)}$ Lyndon in the atoms $\A^{(i)}$, form a basis of the Lie algebra of primitive elements in $\ncsym$.
\end{proof}

%-----------------------------------------------------------------------------
\subsection{The antipode of $\ncsym$}\label{sec: ncsym antipode}
We aim to prove the following result.

\begin{theorem}\label{thm: ncsym antipode} 
Suppose $\A$ is a set partition with $r$ parts and let $\Comps(r)$ denote the set compositions of $[r]$. The antipode $S$ of $\ncsym$ acts on $\A$ by 
\begin{gather}\label{eq: ncsym antipode} 
S({\A}) = \sum_{\gamma\in\Comps(r)} (-1)^{\length{\gamma}} \gamma[\A] \,.
\end{gather}
\end{theorem}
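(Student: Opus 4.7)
My plan is to introduce $\tilde S(\A) := \sum_{\gamma \in \Comps(r)} (-1)^{\length{\gamma}} \gamma[\A]$ as a candidate and verify directly that it satisfies the antipode axiom $m \circ (\tilde S \otimes \id) \circ \Delta = \eta \circ \varepsilon$. Since the antipode of a bialgebra is unique whenever it exists, this identifies $\tilde S$ with $S$. The case $\A = \bm\emptyset$ is immediate from $\Comps(0) = \{\emptyset\}$, so I focus on $\A$ with $r \geq 1$ parts, for which the right-hand side $\eta(\varepsilon(\A))$ vanishes.

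The first main step is the combinatorial identity
\[
\gamma'[\std{\A_K}] \bm\cdot \std{\A_L} = (\gamma', L)[\A]
\]
for any nonempty $L \subseteq [r]$ with $K = [r] \setminus L$ and any $\gamma' \in \Comps(K)$, where $(\gamma', L)$ denotes the set composition of $[r]$ obtained by appending $L$ as a new final block to $\gamma'$ (after identifying $\Comps(|K|)$ with $\Comps(K)$ via the unique increasing bijection). This identity follows directly from the definitions in \eqref{eq: gamma of A} and \eqref{eq: ncsym product}, since both sides unfold to $\std{\A_{\gamma'_1}} \mrg \dotsb \mrg \std{\A_{\gamma'_s}} \mrg \std{\A_L}$.

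Given this, applying the coproduct \eqref{eq: ncsym coproduct} and isolating the two extreme pieces ($K = \emptyset$ and $L = \emptyset$) yields
\[
m(\tilde S \otimes \id)\Delta(\A) = \A + \tilde S(\A) + \sum_{\substack{\gamma \in \Comps(r) \\ \length{\gamma} \geq 2}} (-1)^{\length{\gamma} - 1} \gamma[\A] \,,
\]
with the last sum ranging over length-$\geq 2$ set compositions of $[r]$, each arising uniquely as $(\gamma', L)$ for its final block $L$. Separating the unique length-$1$ term $\gamma = ([r])$ (which contributes $-\A$) from $\tilde S(\A)$ makes the initial $+\A$ cancel, after which the remaining length-$\geq 2$ terms of $\tilde S(\A)$ have sign $(-1)^{\length{\gamma}}$ and cancel pairwise with the $(-1)^{\length{\gamma} - 1}$ terms of the last sum.

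The main obstacle I anticipate is the bookkeeping in the displayed combinatorial identity, particularly tracking how standardization inside $\std{\A_K}$ interacts with both the relabelling implicit in $\gamma'[\cdot]$ and the shift implicit in $\mrg$. Once this identity is verified, the sign-cancellation is a direct term-by-term comparison reminiscent of the bijective argument already used in the proof of Lemma \ref{thm: mcsym primitives}.
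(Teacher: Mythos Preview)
Your proposal is correct and follows essentially the same approach as the paper: both verify that the candidate is a left convolution inverse of $\id$ via the key identity $\gamma'[\std{\A_K}]\cdot\std{\A_L}=(\gamma'\cmrg L)[\A]$ and then cancel terms by peeling off the last block of each set composition. The only difference is organizational---the paper carries out the cancellation on the level of formal sums of set compositions before applying $[\A]$, whereas you first isolate the extreme summands $K=\emptyset$ and $L=\emptyset$---but the underlying bijection is identical.
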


\begin{remarks} {\it 1.} This description of the antipode typically contains many cancellations. For example, it says that 
$S(12.3) = -(12.3) + (12.3) + (1.23)$. 
%\begin{align*}
%S(p_{12.3}) &= -p_{12.3} + p_{12.3} + p_{1.23} \,. \\
% \gamma&: 	\hskip3ex \textcolor{myred}{\scriptstyle 12} 
% 			\hskip4ex \textcolor{myred}{\scriptstyle 1\mrg2}
%			\hskip4ex \textcolor{myred}{\scriptstyle 2\mrg1}
%\end{align*}
There may even be cancellations when $\A$ is atomic. We invite the reader to compute $S(14.2.3)$, which {\it a priori} has 13 terms, but in fact has only nine.
%: $S(14.2.3)=-(14.2.3) + 2(13.2.4)+2(1.24.3) - (12.3.4)-2(1.23.4)-(1.2.34)$. 
%We leave open the task of finding a more efficient formula. 
On the other hand, \eqref{eq: ncsym antipode} is irredundant for many atomic $\A$.
\smallskip

{\it 2.} In case $\A$ is not atomic, we can do better. Suppose $\A=\A'\mrg\A''\mrg\dotsb\mrg\A^{(t)}$ is a splitting of $\A$ into atomic pieces, and let $\length{\A^{(i)}} = r_i$ with $\sum_i r_i = r$. Finally, let $\Comps(\overleftarrow r)$ denote all refinements of the set composition
\[
	\overleftarrow r = \bigl(\{r{-}r_t{+}1,\dotsc,r\},\dotsc,\{r_1{+}1,\dotsc,r_1{+}r_2\},\{1,\dotsc,r_1\}\bigr) .
\]
Then 
\begin{gather}\label{eq: alternate antipode}
	S(\A) = \sum_{\gamma\in\Comps(\overleftarrow r)} (-1)^{\length{\gamma}} \gamma[\A] \,. 
\end{gather}
This follows immediately from the fact that $S$ is an algebra antimorphism and \eqref{eq: ncsym antipode} holds on atomic set partitions. Using this formula, we may express $S(13.2.4)$ using three terms, $S(13.2.4) = (1.24.3) - (1.23.4) - (1.2.34)$. Using \eqref{eq: ncsym antipode} would have required us to write down 13 terms, 10 of which would cancel. 
%We leave open the task of finding a more efficient formula for $S(\A)$ when $\A$ is atomic. 
The relationship between \eqref{eq: alternate antipode} and Theorem 14.31 of \cite{AguMah:1} will be explored in \cite{LauMas:1}.
\end{remarks}

\begin{proof}[Proof of Theorem \ref{thm: ncsym antipode}]
Since the antipode is guaranteed to exist in graded connected bialgebras, we need only check that \eqref{eq: ncsym antipode} provides a left convolution inverse of $\id$. Writing $m$ for multiplication in $\ncsym$, we have
\begin{align*}
m(S\ot\id)\Delta(\A) &= m(S\ot\id)\left(\sum_{K\djcup L=[r]} \std{\A_K} \ot \std{\A_L}\right) 
= m(S\ot\id)\left(\sum_{K\djcup L=[r]} K[\A]\ot L[\A]\right)  \\
\intertext{(viewing $K$ and $L$ as set compositions with one part)}
 &= m \sum_{K\djcup L=[r]} \left(\sum_{\gamma\in\Comps_{|K|}} (-1)^{\length{\gamma}}\gamma[K[\A]]\right)\ot L[\A] \\
&= \left(\sum_{K\djcup L=[r]} \sum_{\gamma\in\Comps_{K}} (-1)^{\length{\gamma}}(\gamma\cmrg L)\right) \! [\A] \,.
\end{align*}
We show that this sum is the zero function on $\A$. We have
\begin{align*}
\sum_{K\djcup L=[r]} \sum_{\gamma\in\Comps_{K}} (-1)^{\length{\gamma}}(\gamma\cmrg L) &=
	\sum_{\gamma\in\Comps_{[r]}} (-1)^{\length{\gamma}}\gamma \ \  + \ 
	\sum_{\substack{K\djcup L=[r] \\ K\neq[r]}} \sum_{\gamma\in\Comps_{K}} (-1)^{\length{\gamma}}(\gamma\cmrg L) \\
&= 	\sum_{\emptyset\subsetneq L\subseteq[r]}\sum_{\substack{\gamma\in\Comps_{[r]}\\ \gamma=\gamma'\cmrg L}} (-1)^{\length{\gamma'\cmrg L}}(\gamma'\cmrg L) \ \ - 
	\sum_{\substack{K\djcup L=[r] \\ K\neq[r]}} \sum_{\gamma\in\Comps_{K}} (-1)^{\length{\gamma\cmrg L}}(\gamma\cmrg L) \\
&= 0 \,,
\end{align*}
as claimed. We conclude that $m(S\ot\id)\Delta(\A) = 0$ for $|\A|>0$, which completes the proof.
%The equality between the last two lines is the M\"obius sum over an interval.
\end{proof}

%-------------------------------------------------------------------
% Begin SECTION
%-------------------------------------------------------------------
\section{Summary Remarks}\label{sec: summary}

%-------------------------------------------------------------------
\subsection{Supercharacter theory}\label{sec: SC}
Let $\mathbb{F}_q$ be a finite field with $q$ elements. The classification problem for irreducible representations of the upper triangular groups $U_n(\mathbb{F}_q)$ is known to be of wild type. After the work of Andr\'e and Yan \cite{And:1995,Yan:1}, it seems a good first step at understanding the character theory of $U_n(\mathbb{F}_q)$ is to study its \emph{supercharacters} and \emph{superclasses}. Superclasses are formed by clumping together certain conjugacy classes of $U_n(\mathbb{F}_q)$. To each superclass is associated a supercharacter, which is the corresponding sum of characters. See \cite{DiaIsa:2008} for an excellent exposition by Diaconis and Isaacs.  

Following this approach to the problem, and mimicing the classical constructions in the representation ring of the symmetric groups, Thiem \cite{Thi:2010} gave the space of supercharacters $\mathcal{SC}_q$ a Hopf algebra structure, with product and coproduct coming from superinflation and restriction.\footnote{More precisely, the coproduct is not explicitly defined there, nor is the product / coproduct compatibility checked, but these were subsequently verified  by Thiem (private communication). The details of his Hopf algebra construction will appear in the report \cite{AIM}.} The result was something that bore a strong resemblance to $\ncsym$ for $q=2$. One of the main topics of the AIM workshop \cite{AIM} was to explore this connection and see what could be said for $q$ arbitrary. Quite a lot of progress was made in several directions. Relevant to the present discussion is that $\mathcal{SC}_2$ is indeed isomorphic to $\ncsym$ as Hopf algebras. 

The isomorphism above is straightforward, simply taking superclass functions $\kappa_\A$ to monomial symmetric functions $m_\A$. (See \cite{Thi:2010} and \cite{BRRZ:2008} for notation.)
In light of this, it would be very interesting to have analogs of \eqref{eq: ncsym primitives} and \eqref{eq: ncsym antipode} for the monomial basis. Some exciting progress was made in this direction on the last day of the workshop, but some details still need to be checked before any formal statement can be made \cite{Nantel}.

%-------------------------------------------------------------------
\subsection{Transfer of structure}\label{sec: transfer}
In Section \ref{sec: ncsym}, we used the idea of set compositions as functions on set partitions to formulate our results. As the proofs of these results indicate, this idea can be mined further. There is a graded connected cocommutative Hopf algebra structure on $\bQ\mathbf{P}$ which is freely generated as an algebra by $\mathrm{fin}(2^\bP)$. Formulas for primitives and the antipode in $\bQ\mathbf{P}$ mimic \eqref{eq: ncsym primitives} and \eqref{eq: ncsym antipode}. 

More is true. In fact, the $\bQ\mathbf{P}$ formulas engender \eqref{eq: ncsym primitives} and \eqref{eq: ncsym antipode} via a transfer of structure coming from a \emph{measuring} of Hopf algebras. We leave the details to \cite{LauMas:1}, where further examples of transfer of structure will be worked out. We quote a key theorem from that work. 

\begin{theorem}\label{thm: transfer}
Let $A, B$ be Hopf algebras and let $C$ be a coalgebra. Suppose $\theta
\colon  B\ot C\to A$ is a \emph{covering} (a surjective coalgebra map that \emph{measures} $B$ to $A$). Let $\iota \colon A\to B\ot C$ be any linear section of $\theta$, that is, $\theta\circ \iota = \id$.
Then the following hold.
\begin{enumerate}
\item\label{itm: primitive} If $p\in B$ is primitive, then for every $c\in C$ the element $\theta(p,c)\in A$ is primitive.
\item\label{itm: antipode} $S_A = \theta\circ (S_B\ot\id)\circ \iota$.
\end{enumerate}
\end{theorem}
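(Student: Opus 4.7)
For part~\eqref{itm: primitive} I would expand directly. Since $\theta$ is a coalgebra map out of the tensor coalgebra $B\ot C$, we have $\Delta_A(\theta(p\ot c)) = \sum \theta(p_{(1)}\ot c_{(1)})\ot\theta(p_{(2)}\ot c_{(2)})$ (Sweedler notation). Substituting $\Delta_B(p) = p\ot 1 + 1\ot p$ and using the measuring identity $\theta(1_B\ot c)=\ep_C(c)\,1_A$ together with the counit axiom in $C$, the four resulting terms collapse to $\theta(p\ot c)\ot 1_A + 1_A\ot\theta(p\ot c)$, so $\theta(p\ot c)$ is primitive.

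For part~\eqref{itm: antipode}, the plan is to prove the stronger identity $S_A\circ\theta = \theta\circ(S_B\ot\id_C)$ of maps $B\ot C\to A$; the desired formula then follows by post-composing with $\iota$, since $\theta\circ\iota=\id_A$. I would work in the convolution algebra $\mathrm{Hom}(B\ot C, A)$, formed from the tensor coproduct on $B\ot C$ and the product on $A$, and show that $S_A\circ\theta$ is a left convolution-inverse of $\theta$ while $\theta\circ(S_B\ot\id_C)$ is a right convolution-inverse. Since any element of a monoid admitting both a left and a right inverse has only one inverse, the two maps coincide, and post-composition with $\iota$ yields the claim.

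The left-inverse verification is immediate: at $b\ot c$, $(S_A\circ\theta)*\theta$ evaluates to $m_A(S_A\ot\id)\Delta_A(\theta(b\ot c)) = \ep_A(\theta(b\ot c))\,1_A = \ep_B(b)\ep_C(c)\,1_A$, using the coalgebra-map property of $\theta$ and the antipode axiom in $A$. The right-inverse check is the heart of the argument and the step I expect to be the main obstacle. Unwinding, $\theta*(\theta\circ(S_B\ot\id_C))$ sends $b\ot c$ to $\sum \theta(b_{(1)}\ot c_{(1)})\cdot\theta(S_B(b_{(2)})\ot c_{(2)})$, and the key move is to recognize the split of $c$ as exactly what the measuring axiom $\theta(xy\ot c) = \sum\theta(x\ot c_{(1)})\theta(y\ot c_{(2)})$ consumes, applied in reverse. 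This folds the product into $\theta\bigl(\bigl(\sum b_{(1)}S_B(b_{(2)})\bigr)\ot c\bigr)$; the antipode axiom in $B$ then reduces the $B$-factor to $\ep_B(b)\,1_B$, and a final application of $\theta(1_B\ot c)=\ep_C(c)\,1_A$ completes the calculation.
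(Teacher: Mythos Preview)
The paper does not actually prove this theorem: it is quoted from the forthcoming work \cite{LauMas:1}, with the explicit remark ``We leave the details to \cite{LauMas:1}.'' So there is no in-paper proof to compare against.

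That said, your argument is correct and is the standard one. For part~(1), the coalgebra-map property of $\theta$ plus the measuring identity $\theta(1_B\ot c)=\ep_C(c)\,1_A$ and the counit axiom in $C$ give exactly what you claim (a minor quibble: there are only two terms arising from $\Delta_B(p)=p\ot1+1\ot p$, not four, but this does not affect the computation). For part~(2), your convolution-algebra argument is clean: $S_A\circ\theta$ is a left inverse of $\theta$ because $\theta$ is a coalgebra map, and $\theta\circ(S_B\ot\id_C)$ is a right inverse of $\theta$ because the measuring axiom lets you fold $\sum\theta(b_{(1)}\ot c_{(1)})\,\theta(S_B(b_{(2)})\ot c_{(2)})$ back into $\theta\bigl(\sum b_{(1)}S_B(b_{(2)})\ot c\bigr)$. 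Uniqueness of two-sided inverses then gives $S_A\circ\theta=\theta\circ(S_B\ot\id_C)$, and precomposing with $\iota$ yields the statement. No gap.
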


Here, $\bQ\mathbf{P}$ covers $\ncsym$ by taking $C$ to be the free pointed coalgebra on set partitions; the covering $\theta(\gamma\otimes \A)$ is the evaluation $\gamma[\A]$ discussed
in Section \ref{sec: functions}. In \cite{LauMas:1}, we also discuss coverings by $\nsym$, where $\nsym$ is the Hopf algebra of noncommutative symmetric functions \cite{GKLLRT:1995} freely generated by symbols $H_n$ that comultiply as
\[
	\Delta(H_n) = \sum_{i+j=n} H_i \otimes H_j \,.
\]
Such coverings exist for every graded cocommutative Hopf algebra $A$ and are given by setting $C$ to be the underlying coalgebra of $A$ and $\theta$ to be the unique covering for which $\theta(H_n \otimes a) := \pi_n(a)$, where $\pi_n$ is the projection to the $n^\mathrm{th}$ graded component of $A$. 
Again, primitive and antipode formulas in $\nsym$ are transfered via Theorem \ref{thm: transfer}. Formula \eqref{itm: antipode} in the theorem simply recovers Takeuchi's formula for the antipode, whereas Formula \eqref{itm: primitive} is used to obtain Takeuchi-type formulas for primitives in $A$. In particular, we obtain generators for the space of primitives (analogous to Theorem \ref{thm: ncsym primitives}) as well as a projection onto the space of all primitives.

%\newpage
%-------------------------------------------------------------------
% Begin BIBLIOGRAPHY
%-------------------------------------------------------------------
%\nocite{
%AguMah:1, AguOre:2008, AguSot:2005, GKLLRT:1995, 
%MalReu:1995, PatReu:2004}
%\bibliographystyle{amsplain}
%\bibliographystyle{hsiam}
\bibliographystyle{abbrv}
\bibliography{bibl}

\end{document}